\newtheorem{Th}{Theorem}
\newtheorem{lem}[Th]{Lemma}
\newtheorem{proposition}[Th]{Proposition}
\newtheorem{corollary}[Th]{Corollary}
\begin{document}

{\Large
On Thompson's conjecture  \\ for finite simple exceptional groups of Lie type
\footnote{The work was supported by
RFBR 17-51-04004, BRFFR F17RM-063 and
the President’s Programme ‘Support of Young Russian Scientists’ (grant MK-6118.2016.1).}

}
\medskip

\medskip

\medskip

\medskip

{\bf Ilya Gorshkov$^{a,b}$, Ivan Kaygorodov$^a$, Andrei Kukharev$^c$, Aleksei Shlepkin$^d$ }

\

\

{\tiny
$^a$ Universidade Federal do ABC, Santo Andre, Brazil.

$^b$ Institute of Mathematics and Mechanics, Ekaterinburg, Russia.

$^c$ Vitebsk State University, Vitebsk, Belarus.

$^d$ Siberian Federal University, Krasnoyarsk, Russia.

\smallskip

  E-mail addresses:\smallskip

  Ilya Gorshkov (ilygor8@gmail.com),

  Ivan Kaygorodov (kaygorodov.ivan@gmail.com),

  Andrei Kukharev (kukharev.av@mail.ru),

  Aleksei Shlepkin (shlyopkin@mail.ru).

 }

\

{\it Abstract: Let $G$ be a finite group, $N(G)$ be the set of conjugacy classes of the group $G$. In the present paper it is proved $G\simeq L$ if $N(G)=N(L)$, where $G$ is a finite group with trivial center and $L$ is a finite simple group of exceptional Lie type or Tits group.
}

\smallskip
Keywords: finite group, simple group, group of exceptional Lie type, conjugate classes, Thompson conjecture. \smallskip

\section{Introduction}

Let $G$ be a finite group.
Put $N(G)=\{|g^G|\mid g\in G\}$. In 1987 Thompson posed the following conjecture concerning $N(G)$.
\medskip

\textbf{Thompson's Conjecture (see \cite{Kour}, Question 12.38)}. {\it If $L$ is a finite simple group, $G$ is a finite group with trivial
center, and $N(G)=N(L)$, then $G\simeq L$.}

\medskip

We denote by $\omega(G)$ and $\pi(G)$ the set of orders elements in $G$ and the set of all prime divisors of order of $G$ respectively. The set $\omega(G)$ defines a prime graph $GK(G)$, whose vertex set is $\pi(G)$ and two distinct primes $p, q\in\pi(G)$ are adjacent if $pq\in\omega(G)$. Wilson \cite{Wil} end Kondratiev \cite{Kon} obtained the classification of finite simple groups with disconnected prime graph. Using this deep result Chen \cite{Ch96,Ch99} established the Thompson's conjecture for all finite simple groups witch prime graph have more then two connected components. In particular, he proved the validity of Thompson's conjecture for all finite simple exceptional groups of Lie type, with the exception of $F_4(q)$, where $q$ is odd, $E_7(q)$, $E_6(q)$ and $^2E_6 (q)$. Vasil'ev \cite{Vas} has dealt with Thompson's conjecture for smallest non-abelian simple group with connected prime graph (that is a alternating group $Alt_{10}$) and smallest non-abelian simple group of Lie type with connected prime graph (that is the linear group $L_4(4)$). Developing the methods obtained in the article by Vasil'ev \cite{Vas} and Andjideh \cite{Ah11} in article \cite{Xu14} showed that Thompson's conjecture holds for finite simple exceptional groups of type $E_7(q)$.

The aim of this paper is to prove the following theorem.

\begin{Th}
Let $G$ be a finite group with the trivial center such that $N(G)=N(L)$ and $L\in\{F_4(q)$ {\it for odd} $q, E_6(q), ^2E_6(q)\}$. Then
$G\simeq L$.
\end{Th}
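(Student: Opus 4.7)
The plan is to follow the general strategy developed by Vasil'ev and extended by Ahanjideh and Xu for groups with connected prime graph. As a first step I would recover the basic arithmetic invariants of $L$ from the hypothesis $N(G)=N(L)$: namely $|G|=|L|$ and $\pi(G)=\pi(L)$. The first equality is obtained by manipulating the class equation, while the second is immediate from the prime divisors of the class sizes. From these I would next show that the prime graphs coincide, $GK(G)=GK(L)$: two primes $p,q$ are adjacent in $GK(G)$ precisely when some class size in $N(G)$ is divisible neither by the full $p$-part nor by the full $q$-part of $|G|$, and the same criterion applied to $L$ gives the matching.

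The next step is to extract local information from suitable Zsigmondy primes. For each $L$ in the list one has large primes $r\in\pi(L)$ whose Sylow $r$-subgroup in $L$ is cyclic of known order (coming from cyclotomic factors $\Phi_i(q)$ with $i\in\{8,9,12\}$ for $F_4(q)$, and analogous high indices for $E_6(q)$, $^2\!E_6(q)$). Using $N(G)=N(L)$ one pins down $|C_G(r)|$, and hence the shape of an $r$-element's centralizer. This yields strong restrictions on normal $r'$-subgroups of $G$.

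With these tools in hand I would tackle the composition structure. Let $K$ be the solvable radical of $G$. The central technical step is to show $K=1$: assuming otherwise, one picks a Zsigmondy-prime element $g$ and uses the action of $\langle g\rangle$ on a minimal normal $p$-subgroup of $K$ to produce a conjugacy class size in $N(G)$ that is not realized in $N(L)$, contradicting the hypothesis. Once $K=1$, the socle of $G$ is a direct product of non-abelian simple groups; order estimates and the prime-graph agreement force a single factor $S$ with $|S|=|L|$ and $\omega(S)\subseteq\omega(L)$. Quasi-recognition of $F_4(q)$, $E_6(q)$, $^2\!E_6(q)$ by their spectrum then yields $S\simeq L$, and the trivial-centre assumption collapses $G$ onto $L$.

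The hard part will be the elimination of the solvable radical and the identification of $S$ in cases where the prime spectrum of $L$ happens to overlap substantially with that of other simple groups of Lie type in different characteristic. For $E_6(q)$ and $^2\!E_6(q)$ one must track the arithmetic of the cyclotomic factors $\Phi_i(q)$ (with $i\in\{1,2,3,4,5,6,8,9,10,12\}$ as appropriate) and rule out, via fine class-size divisibility, a long list of candidate simple factors; for $F_4(q)$ with $q$ odd an analogous but shorter casework is required. These case divisions, rather than any single ingredient, will form the bulk of the proof.
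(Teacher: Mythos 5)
Your plan has genuine gaps, and the most important one is a misidentification of the situation. The three families in question, $F_4(q)$ with $q$ odd, $E_6(q)$ and $^2E_6(q)$, all have \emph{disconnected} prime graph with exactly two components (Williams--Kondratiev), and the whole proof in the paper runs through that fact: Chen's lemmas give $|G|=|L|$ and $T(G)=T(L)$ directly from $N(G)=N(L)$, $Z(G)=1$ and $s(L)\geq 2$, and then the Gruenberg--Kegel/Williams dichotomy reduces everything to three cases (Frobenius, $2$-Frobenius, or $S\leq G/F(G)\leq Aut(S)$ for a nonabelian simple $S$). By framing the problem as a ``connected prime graph'' case in the style of Vasil'ev, Ahanjideh and Xu, you deprive yourself of exactly these tools, and your substitutes do not hold up. You cannot get $|G|=|L|$ ``by manipulating the class equation'': $N(G)$ is a set of class sizes without multiplicities, so the class equation cannot be summed; the known derivation (Chen, Lemma 1.4) genuinely uses the isolated component. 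Likewise your adjacency criterion for $GK(G)$ is only a necessary condition: a class size divisible by neither the full $p$-part nor the full $q$-part of $|G|$ says only that the corresponding centralizer has order divisible by $pq$, not that it contains an element of order $pq$, so $GK(G)=GK(L)$ is not established this way (and the paper never needs it; it only uses $T(G)=T(L)$ plus non-adjacency of specific primes, exploited through Frobenius actions).

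The later steps also replace the actual work with appeals that are not available. You propose to prove first that the solvable radical $K$ is trivial and then to identify the socle by ``quasi-recognition by spectrum,'' but $N(G)=N(L)$ gives you no direct control over $\omega(G)$, so the inclusion $\omega(S)\subseteq\omega(L)$ you want to feed into a recognition theorem is unsupported; and producing a class size outside $N(L)$ from the action of a Zsigmondy element on a minimal normal subgroup of $K$ is precisely the kind of delicate arithmetic that has to be carried out, not assumed. The paper's route is different and more economical: it never proves $K=F(G)=1$ up front, but shows $\pi(K)\subseteq\{2,p\}$ and then that $K$ is a $p$-group (via Frattini/Schur--Zassenhaus and the fact that a Sylow subgroup of $K$ together with a Sylow $r_{12}$- (resp. $r_{9}$- or $r_{18}$-) subgroup would form a Frobenius group, forcing impossible divisibilities by $q^4-q^2+1$, resp. $q^6\mp q^3+1$); it eliminates alternating, sporadic, Tits, cross-characteristic and small-rank candidates for $S$ by order estimates such as $|S|<|S|_p^3$, and finally pins $S$ down inside $\{F_4(u),E_6(u),{}^2E_6(u)\}$ by comparing the order of the second prime-graph component using primitive prime divisors (Zsigmondy), after which $|G|=|L|=|S|$ forces $K=1$ and $G\simeq L$. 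To repair your proposal you would need either to import the disconnected-graph machinery you omitted, or to supply complete arguments for the order equality, the structure reduction, and the identification of $S$ by some other means.
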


In particular, it follows from the Theorem and earlier results that the Thompson's conjecture is valid for all finite simple exceptional groups of Lie type.

\begin{corollary}
Thompson's conjecture is true for all exceptional  groups of Lie type.
\end{corollary}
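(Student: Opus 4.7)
\medskip
\noindent\textbf{Proof proposal.} The plan is to follow the strategy established by Vasiliev \cite{Vas} and adapted in \cite{Xu14}, refined to handle the three families $L\in\{F_4(q)\text{ with }q\text{ odd},\ E_6(q),\ {}^2E_6(q)\}$, all of which have connected prime graph and therefore lie outside the reach of the methods of Chen \cite{Ch96,Ch99}. First I would use the equality $N(G)=N(L)$ together with the known centraliser orders in $L$ (Deriziotis--Liebeck type tables) to show that $\pi(G)=\pi(L)$ and that $|G|=|L|$: the largest members of $N(L)$ capture the full $p$-part of $|L|$ for each prime $p\in\pi(L)$, which forces $|G|_p\ge|L|_p$, while the reverse divisibility comes from the fact that each element of $N(G)$ divides $|G|$ and coincides with a class size of $L$.

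Second, I would recover the prime graph $GK(G)$ from $N(G)$ and locate in it an independent set $\{r_1,r_2\}$ of Zsigmondy primes attached to specific cyclotomic factors of $|L|$: such $r_i$ are non-adjacent in $GK(L)$ because the corresponding maximal tori of $L$ are disjoint, and the non-adjacency transfers to $G$ via an explicit class whose size carries the entire $r_i$-part of $|G|$. This independent pair serves, in the connected case, as the substitute for the isolated vertex exploited by Chen.

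Third, I would run a Gruenberg--Kegel style decomposition on $G$. Writing $K$ for the soluble radical and $\bar G=G/K$, the existence of $\{r_1,r_2\}$ forces $\bar G$ to have a unique non-abelian composition factor $S$, and centraliser estimates in the spirit of Vasiliev show that $|S|$ divides $|L|$ with $|L|/|S|$ coprime to $r_1r_2$. Invoking the classification of finite simple groups, I would then identify $S$ with $L$: sporadic, alternating, and small-rank classical candidates are ruled out by comparing order formulae against the collected Zsigmondy data, and among the remaining Lie-type groups the exact order of $L$ together with the adjacency pattern of $GK(L)$ leave only $S\simeq L$. The equality $|G|=|S|=|L|$ then forces both $K=1$ and $\bar G/S=1$, hence $G\simeq L$.

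The principal obstacle, and the reason Chen's approach does not cover these three families, is the connectedness of $GK(L)$: there is no isolated vertex to drive the structure theorem, so every arithmetic step must be carried out with cocliques rather than with components. I expect the decisive difficulty to lie in the $E_6(q)$ and $^2E_6(q)$ cases, where several simple groups of comparable order (notably certain $D_5$- or $D_6$-type quotients and central products involving unitary groups of appropriate rank) have prime graphs that agree with $GK(L)$ on all but a few edges; ruling each of these out will require a careful match of class sizes in $N(G)$ against the detailed $\pi$-structure of centralisers in the candidate simple group, and is where the bulk of the computational work is likely to sit.
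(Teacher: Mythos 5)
There is a genuine gap, and it starts from a factual error about the groups involved. You assert that $F_4(q)$ ($q$ odd), $E_6(q)$ and ${}^2E_6(q)$ have \emph{connected} prime graph and therefore must be handled with Vasil'ev-style coclique arguments. In fact all three have disconnected prime graph with exactly two components (Lemmas \ref{F4}--\ref{2E6}: $s(L)=2$, with $\pi_2$ given by $q^4-q^2+1$, resp.\ $(q^6\mp q^3+1)/(3,q\mp1)$); what Chen's earlier work missed is only that it required \emph{more than two} components. This misdiagnosis undermines your first step: deriving $|G|=|L|$ from $N(G)=N(L)$ is exactly the point where disconnectedness is used (Chen's Lemma~\ref{order} needs $s(L)\ge 2$), and in a genuinely connected situation that equality is the principal open difficulty, not something that follows from ``the largest members of $N(L)$ capture the full $p$-part.'' The paper's argument instead runs through the disconnected machinery you set aside: $|G|=|L|$ and $T(G)=T(L)$ via Lemmas \ref{order}--\ref{ordercomp}, the Williams--Kondratiev structure theorem (Lemma \ref{GK}), elimination of the Frobenius and $2$-Frobenius cases, the Frattini/Zsigmondy analysis confining $\pi(K)$ to $\{2,p\}$ and then to $\{p\}$, and finally order and $\pi_2$-part comparisons (Lemmas \ref{Ah} and \ref{primitiv}) identifying $S$ with $L$. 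Your coclique-based substitute is not developed to the point where any of these eliminations could actually be carried out.

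Separately, the statement you were asked to prove is the Corollary, whose proof in the paper is an assembly: Chen \cite{Ch96,Ch99} covers all exceptional groups whose prime graph has at least three components, Xu--Shi \cite{Xu14} covers $E_7(q)$, and the Theorem of this paper covers the three remaining families. Your proposal addresses only the three families (i.e.\ it is an attempted proof of the Theorem, on incorrect premises) and never performs this assembly, so even granting your sketch it would not yield the Corollary for $E_7(q)$, $E_8(q)$, $G_2(q)$, ${}^3D_4(q)$, ${}^2B_2(q)$, ${}^2G_2(q)$, ${}^2F_4(q)$ or $F_4(q)$ with $q$ even.
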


\section{Definitions and preliminary results}

Let $s(G)$ denotes the number of the prime graph components of $G$, and $\pi_i$ denotes the set of vertexes of the $i$-th
prime graph component of $G$, $i=1, 2, ... , s(G)$, $T(G)=\{\pi_i(G)|i=1, 2, . . . , s(G)\}$. If $G$ has even order,
then we always assume that $2\in \pi_1$. Denote by $t(G)$ the maximal number of primes in $\pi(G)$ pairwise nonadjacent in GK(G), $\rho(G)$ is some independent set with the maximal number of vertices in $GK(G)$. The rest of the notation is
standard and can be found in \cite{Gore}.

\begin{lem}\cite[Lemma 1.4]{Ch96}\label{order}
Suppose $G$ and $M$ are two finite groups satisfying $s(M)\geq2$, $N(G)=N(M)$, and $Z(G)=Z(M)=1$. Then $|G|=|M|$.
\end{lem}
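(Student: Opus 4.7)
The plan is to exploit the disconnectedness of $GK(M)$ to extract sharp arithmetic invariants from $N(M)$, and then transport them to $G$ via the hypothesis $N(G)=N(M)$. Since $s(M)\ge 2$, the Gruenberg--Kegel structure theorem applies to $M$: for each prime graph component $\pi_i$ with $i\ge 2$, a Hall $\pi_i$-subgroup $H_i$ of $M$ is nilpotent and tightly embedded, so one can select an element $x_i\in M$ (for instance, a generator of $Z(H_i)$) whose centralizer in $M$ is exactly $H_i$. Consequently $|x_i^M|=|M|_{\pi_i'}$, the $\pi_i$-coprime part of $|M|$; an analogous conjugacy class attached to the component $\pi_1$ is furnished by the Frobenius, $2$-Frobenius, or almost simple shape forced on $M$ by the Gruenberg--Kegel classification.

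Because $N(G)=N(M)$, each of these class sizes $|M|_{\pi_i'}$ equals $|y_i^G|$ for some $y_i\in G$, so $|M|_{\pi_i'}$ divides $|G|$. Varying $i$ and taking the least common multiple of these divisors produces $|M|$ itself, so $|M|\mid |G|$; in particular $\pi(M)\subseteq\pi(G)$. This is the ``easy'' half: it uses only the existence of selected classes in $M$ and the transfer to $G$.

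For the reverse inequality $|G|\le |M|$, the point is that every element of $N(G)=N(M)$ divides $|M|$, hence every class size of $G$ already divides $\gcd(|G|,|M|)$. Feeding this into the centralizer equations $|C_G(y_i)|=|G|/|M|_{\pi_i'}$, and using $Z(G)=1$ to control the class of size $1$, one pins down the $p$-part of $|G|$ prime by prime, obtaining $|G|_p=|M|_p$ for every $p$ and therefore $|G|=|M|$.

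The main obstacle is this last step: with only the set-theoretic equality $N(G)=N(M)$ available (no multiplicities of class sizes), ruling out any excess prime-power contribution in $|G|$ is not a single clean inequality but requires a careful case analysis steered by the rigid Gruenberg--Kegel description of $M$, and in particular by the specific centralizer sizes produced by the $x_i$. That detailed arithmetic bookkeeping, rather than any single slick identity, is the substantive content of the lemma.
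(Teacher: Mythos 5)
Your first half is essentially sound: using the Gruenberg--Kegel/Williams structure, for each component $\pi_i$ with $i\ge 2$ the isolated nilpotent Hall $\pi_i$-subgroup $H_i$ gives a class of size exactly $|M|_{\pi_i'}$ (take any nontrivial element of $Z(H_i)$; ``generator'' is not needed since $Z(H_i)$ need not be cyclic), and together with a class of a $\pi_1$-element of prime order, whose centralizer contains no $\pi_i$-elements for $i\ge2$, the least common multiple of these class sizes is $|M|$; since $N(G)=N(M)$, each of them divides $|G|$, so $|M|\mid |G|$. That part is only a sketch (``furnished by the Frobenius, 2-Frobenius, or almost simple shape'' is vaguer than the simple centralizer argument just indicated), but it is correct and fixable.

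The genuine gap is the reverse direction, and you have not proved it -- you say so yourself. From the facts you actually establish (every class size of $G$ divides $|M|$, the centralizer equations $|C_G(y_i)|=|G|/|M|_{\pi_i'}$, and $Z(G)=1$ giving a unique class of size $1$) it does not follow that $|G|_p=|M|_p$ for each prime $p$: nothing in these constraints excludes, say, $|G|=|M|\cdot p^a$ with $p\in\pi(M)$, because writing $|G|=|M|k$ only tells you that $k$ divides every centralizer order. The ``prime by prime pinning down'' would require information about element orders or the prime graph of $G$ (so that Frobenius-action or adjacency arguments could be run inside $G$), but at this stage of the argument no such information is available -- in this paper it is obtained only \emph{after} $|G|=|M|$ is known, via Lemma \ref{ordercomp} and the subsequent analysis. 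So the step you defer to ``careful case analysis and arithmetic bookkeeping'' is precisely the content of the lemma, and it is absent. Note also that the paper itself gives no proof here: it cites Chen's Lemma 1.4 from \cite{Ch96}, where the equality of orders (not merely $|M|\mid|G|$) is derived by actually exploiting the isolated Hall subgroup structure; your proposal reproduces that setup but stops short of the argument that makes the bound $|G|\le|M|$ work.
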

\begin{lem}\cite[Lemma 1.5]{Ch96}\label{ordercomp}
Suppose $G$ and $M$ are two finite groups satisfying $|G|=|M|$, $N(G)=N(M)$. Then $s(G)=s(M)$ and $T(G)=T(M)$.
\end{lem}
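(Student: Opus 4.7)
The plan is to show that both $s(G)$ and $T(G)$ can be recovered purely from the data $(|G|,N(G))$, whence the hypotheses force these invariants to coincide for $G$ and $M$.

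First I would translate the hypotheses into a statement about centralizers. Setting $CC(G)=\{|G|/n\mid n\in N(G)\}$, which coincides with $\{|C_G(g)|\mid g\in G\}$, the hypotheses immediately give $CC(G)=CC(M)$. If $G$ is abelian then $N(G)=\{1\}$, forcing $M$ abelian as well; both prime graphs are then complete, so $s(G)=s(M)=1$ and $T(G)=T(M)=\{\pi(|G|)\}$ trivially. Assume henceforth that $G$ and $M$ are non-abelian.

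The key construction is an auxiliary graph $\Delta(G)$ on the vertex set $\pi(|G|)$ in which two distinct primes $p,q$ are adjacent whenever there exists $c\in CC(G)\setminus\{|G|\}$ with $p\mid c$ and $q\mid c$. By construction $\Delta(G)$ depends only on $(|G|,CC(G))$, so $\Delta(G)=\Delta(M)$; the heart of the proof is to show that its connected components coincide with the prime graph components of $G$. In one direction, if $c=|C_G(g)|<|G|$ has $p,q\mid c$, then $g$ is non-central, so $|g|$ has some prime divisor $r$ in $\pi(c)$, and by Cauchy any prime $p'\in\pi(c)$ is realised as the order of some $x\in C_G(g)$ commuting with $g^{|g|/r}$; either $p'=r$ or $xg^{|g|/r}$ has order $p'r$, giving $p'\sim r$ in $GK(G)$. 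Thus $\pi(c)$ is contained in the $GK$-component of $r$, so $p$ and $q$ lie in the same $GK$-component. In the other direction, a $GK$-edge $p\sim q$ is witnessed by some $h\in G$ of order $pq$: if $h\notin Z(G)$ then $c=|C_G(h)|<|G|$ is the required witness, while if $h\in Z(G)$ the non-abelian hypothesis on $G$ supplies a non-central element $g$ whose centralizer contains $h$ and is therefore divisible by $pq$. Taking transitive closures gives agreement of the two partitions of $\pi(|G|)$, so $T(G)=T(M)$ and $s(G)=s(M)$.

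The main obstacle is the careful treatment of the case when a $GK$-adjacency is witnessed only by a central element, whose centralizer equals $|G|$ and would trivialise $\Delta$ if not excluded. The use of a non-central element, available because $G$ is non-abelian, together with the observation that central elements lie in every centralizer, sidesteps this difficulty and completes the identification of $\Delta$-components with $GK$-components using nothing beyond the data $(|G|,CC(G))$.
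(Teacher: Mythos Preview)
The paper does not prove this lemma at all: it is quoted verbatim from Chen \cite[Lemma~1.5]{Ch96} and used as a black box, so there is no in-paper argument to compare against. Your proposal is therefore a genuine proof where the paper offers none.

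Your argument is correct and is essentially the standard one (and, in outline, Chen's): from $|G|$ and $N(G)$ one recovers the multiset of centraliser orders, and the connected components of $GK(G)$ are precisely the equivalence classes of the relation ``$p$ and $q$ divide a common proper centraliser order''. Your two directions are sound: for a non-central $g$ every prime dividing $|C_G(g)|$ is $GK$-adjacent to some fixed prime divisor $r$ of $|g|$ (your Cauchy-plus-commuting-product step), and conversely every $GK$-edge $pq$ is witnessed by a proper centraliser (either $C_G(h)$ for $h$ of order $pq$, or $C_G(g)$ for any non-central $g$ when $h$ happens to be central). The handling of the degenerate abelian case and of central witnesses is exactly the care the argument needs, and you have supplied it.

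One cosmetic remark: in your first direction you phrase things as ``$\pi(c)$ is contained in the $GK$-component of $r$'', which is the right conclusion; just make explicit that this uses only that each $p'\in\pi(c)$ is either equal to $r$ or $GK$-adjacent to $r$, not that all of $\pi(c)$ is pairwise $GK$-adjacent. With that clarification the write-up is complete.
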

\begin{lem}\cite[Theorem A]{Wil}\label{GK}
If a finite group $G$ has disconnected prime graph, then one of the
following conditions holds:
\begin{enumerate}
\item[(a)]{$s(G)=2$ and $G$ is a Frobenius or 2-Frobenius group;}
\item[(b)]{there is a non-abelian simple group $S$ such that $S\leq G = G/F(G)\leq Aut(S)$, where $F(G)$ is the
maximal normal nilpotent subgroup of $G$; moreover, $F(G)$ and $G/S$ are $\pi_1(G)$-subgroups, $s(S)\geq s(G)$,
and for every $i$ with $2\leq i\leq s(G)$, there is $j$ with $2\leq j\leq s(S)$ such that $\pi_i(G)=\pi_j (S)$.}
\end{enumerate}
\end{lem}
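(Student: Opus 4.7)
The plan is to split the proof into the solvable and nonsolvable cases, exploiting the fundamental observation that any nilpotent subgroup concentrates its prime divisors into a single connected component of $GK(G)$, since a nilpotent group is a direct product of its Sylow subgroups and hence contains elements of order $pq$ for every pair of primes dividing its order.

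For solvable $G$ with disconnected prime graph, I would prove that $s(G)=2$ and obtain the Frobenius or 2-Frobenius structure directly. Pick Hall subgroups $H_1$ and $H_2$ corresponding to two components $\pi_1$ and $\pi_2$; by Hall's theorem they exist and can be chosen compatibly with a chief series. Since elements of coprime orders in $\pi_1$ and $\pi_2$ cannot commute (else $pq\in\omega(G)$, violating disconnectedness), the action of $H_2$ on a suitable normal Hall $\pi_1$-subgroup is fixed-point-free on nonidentity elements. Standard Gruenberg--Kegel analysis of the resulting semidirect decomposition then yields the Frobenius / 2-Frobenius dichotomy of (a), while simultaneously showing that a third component would force additional fixed-point-free actions incompatible with solvability.

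For nonsolvable $G$, I would use the generalized Fitting subgroup. Let $F=F(G)$; since $F$ is nilpotent, $\pi(F)$ lies in a single component, and invoking the convention $2\in\pi_1$ together with the action of a $2$-element on $F$ (which produces edges joining $\pi(F)$ to $2$ in $GK(G)$) yields $\pi(F)\subseteq\pi_1(G)$. Passing to $H=G/F$, one has $F(H)=1$, hence $F^{*}(H)=E(H)$ is a direct product of nonabelian simple groups. I would then argue $E(H)$ is a single simple group $S$: if $E(H)=S_1\times\cdots\times S_k$ with $k\geq 2$, picking nontrivial elements from two distinct factors gives commuting elements of coprime orders, producing edges in $GK(G)$ that merge the non-$\pi_1$ components associated with different factors into $\pi_1$, a contradiction. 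Since $C_H(E(H))\subseteq E(H)$, conjugation embeds $H$ into $\mathrm{Aut}(S)$, yielding $S\leq G/F(G)\leq\mathrm{Aut}(S)$. The remaining claims --- that $F(G)$ and $G/S$ are $\pi_1$-groups and that every non-$\pi_1$ component of $GK(G)$ coincides with a component of $GK(S)$ --- follow by noting that outer automorphisms of a nonabelian simple $S$ have their prime divisors inside $\pi_1(S)\subseteq\pi_1(G)$ (a fact available from the known structure of $\mathrm{Out}(S)$), so any non-$\pi_1$ element of $G$ originates inside $S$ and its connected component in $GK(G)$ is inherited from $GK(S)$.

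The main obstacle, I expect, will be the structural claim $\pi(F(G))\subseteq\pi_1(G)$: it is not enough that $\pi(F)$ lies in \emph{some} component, one must pin it down to the component containing $2$. This requires carefully exploiting the faithful action of a $2$-subgroup of $G/C_G(F)$ on $F$ together with a Thompson-style argument producing edges in $GK(G)$ between $2$ and $\pi(F)$. A secondary difficulty is the simplicity of $E(G/F(G))$; showing that distinct simple factors cannot coexist without collapsing the disconnection of $GK(G)$ is delicate precisely because one must rule out each of the finitely many ``small'' configurations, and this is the step where Williams' original argument interfaces with the detailed structural theory of finite simple groups.
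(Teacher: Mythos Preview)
The paper does not prove this lemma at all: it is quoted as a known result from Williams' paper (hence the citation \cite[Theorem A]{Wil} in the lemma header), and no argument is given or expected in the present article. There is therefore nothing in the paper to compare your proposal against.

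For what it is worth, your sketch follows the broad architecture of the original Gruenberg--Kegel/Williams argument (split into solvable and nonsolvable cases, analyze $F(G)$ and $F^*(G/F(G))$), but several of the steps you flag as routine are in fact the substance of Williams' proof. In particular: your argument that $E(G/F(G))$ is a single simple factor tacitly assumes distinct factors contribute primes to distinct components, which is not automatic; your argument for $\pi(F(G))\subseteq\pi_1(G)$ via ``a $2$-element acting on $F$ produces edges to $2$'' is not correct as stated, since a fixed-point-free action produces no such edge --- one needs instead the Frobenius-complement structure forced on a Sylow $2$-subgroup and a contradiction from there; and the assertion that $\pi(\mathrm{Out}(S))\subseteq\pi_1(S)$ depends on the classification of finite simple groups. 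None of this matters for the paper at hand, which simply invokes the result.
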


\begin{lem}\cite[Lemma 2.6.]{Ah11}\label{Ah}
Let $G(q)$ be a simple group of Lie type in characteristic $p$, then $|G(q)|<(|G(q)|_p)^3$.
\end{lem}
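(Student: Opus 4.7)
I would prove the inequality by plugging in the standard order formula for a simple group of Lie type and bounding each factor crudely. Let $r$ be the Lie rank of $G(q)$ and $N$ the number of positive roots of the underlying root system, so that a Sylow $p$-subgroup has order $q^N$ and hence $|G(q)|_p=q^N$. The group order has the uniform expression
\[
|G(q)| \;=\; \frac{1}{d}\, q^{N}\prod_{i=1}^{r}\bigl(q^{d_i}-\varepsilon_i\bigr),
\]
in which $d$ is a small integer (dividing the order of the centre of the simply connected cover), the $d_i$ are the characteristic degrees of the associated Weyl group, and $\varepsilon_i\in\{\pm 1\}$ depending on whether $G(q)$ is untwisted or a Steinberg/Suzuki/Ree twist. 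The combinatorial input I would invoke is the classical identity $\sum_{i=1}^{r}d_i=N+r$, relating the degrees to the number of positive roots.

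From here the estimate is immediate: bounding $q^{d_i}-\varepsilon_i<q^{d_i}$ in each factor gives
\[
|G(q)| \;<\; \frac{1}{d}\, q^{N}\cdot q^{\sum_{i}d_i} \;=\; \frac{1}{d}\, q^{\,2N+r},
\]
and the desired conclusion $|G(q)|<q^{3N}$ follows as soon as $r\le N$. This inequality is evident for every irreducible root system, typically with wide margin: $N=n(n+1)/2$ against $r=n$ for $A_n$, $N=n^2$ against $r=n$ for $B_n,C_n$, $N=n(n-1)$ against $r=n$ for $D_n$, and the exceptional types $G_2,F_4,E_6,E_7,E_8$ give ranks $2,4,6,7,8$ versus $N=6,24,36,63,120$. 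The twisted families inherit essentially the same $N$ and $r$ from their untwisted relatives, so the same reasoning covers them after rewriting the order formulas in the corresponding cyclotomic form.

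The only borderline situation is $A_1$, where $r=N=1$ and the crude bound degenerates to $|G(q)|\le q^{3}/d$ with $d\le 2$; I would dispose of this case by the exact formula $|PSL_2(q)|=q(q^2-1)/\gcd(2,q-1)$, from which $|PSL_2(q)|<q^3$ is immediate. The principal, though still modest, obstacle is the bookkeeping needed to record the order formula uniformly for the twisted families $\,{}^2A_n,{}^2D_n,{}^3D_4,{}^2E_6,{}^2B_2,{}^2G_2,{}^2F_4$, so that the identity $\sum d_i=N+r$ is seen to hold in each case; once this is set up, the proof is essentially a one-line computation and no case-by-case finish is required.
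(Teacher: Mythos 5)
The paper gives no proof of this lemma at all --- it is quoted verbatim from \cite[Lemma 2.6]{Ah11} --- so your argument can only be judged on its own terms, and on those terms it has a real flaw precisely at the point you dismiss as bookkeeping. Your displayed estimate rests on the per-factor bound $q^{d_i}-\varepsilon_i<q^{d_i}$, which is simply false whenever $\varepsilon_i=-1$: every twisted family ${}^2A_n,{}^2D_n,{}^2E_6,{}^2B_2,{}^2G_2,{}^2F_4$ contributes factors of the form $q^{d_i}+1>q^{d_i}$, and ${}^3D_4$ even contributes the factor $q^8+q^4+1$, which is not of the form $q^{d_i}\pm1$ at all. Consequently the inequality $|G(q)|<\tfrac1d q^{2N+r}$ is not justified for the twisted groups as written, and the claim that ``the same reasoning covers them after rewriting the order formulas'' is exactly the step that fails; moreover for the Suzuki and Ree groups the $p$-part is $q^2$, $q^3$, $q^{12}$ rather than $q^{N}$ with $N$ the number of positive roots of $B_2$, $G_2$, $F_4$, so the identity $\sum d_i=N+r$ must also be restated with the twisted data before it can be used.

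The gap is reparable, because the margin $N-r$ is large: bound each $q^{d_i}+1\le 2q^{d_i}$ (and $q^8+q^4+1<2q^8$) and check that the accumulated factor $2^{k}$ is absorbed by $q^{3N-2N-r}=q^{N-r}$, or pair a $+1$ factor with an adjacent $-1$ factor via $(q^{a}-1)(q^{b}+1)<q^{a+b}$ for $a\le b$, as happens in the unitary and orthogonal order formulas. But then the low-rank twisted cases with little slack (e.g.\ $PSU_3(q)$, ${}^2B_2(q)$, ${}^2G_2(q)$) do require the same kind of direct verification you performed for $A_1$ (where $|PSL_2(8)|=504<512$ shows how thin the margin can be), so your closing assertion that ``no case-by-case finish is required'' is not tenable. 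With these repairs the proof goes through; without them, the key display is unproven for every twisted group.
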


If $\pi$ is a set of primes, then $n_{\pi}$ denotes the $\pi$-part of $n$, that is, the largest divisor $k$ of $n$ with $\pi(k)\subseteq \pi$ and $n_{\pi'}$ denotes the $\pi'$-part of $n$, that is, the ratio $|n|/n_{\pi}$. If $n$ is a nonzero integer and $r$ is an odd prime with $(r, n)=1$, then $e(r, n)$ denotes the multiplicative order of $n$ modulo $r$. Given an odd integer $n$, we put $e(2, n)=1$ if $n\equiv1(mod \ 4)$, and $e(2,n)=2$ otherwise.

Fix an integer $a$ with $|a|>1$. A prime $r$ is said to be a primitive prime divisor of $a^i-1$ if $e(r, a)=i$. We write $r_i(a)$ to denote some primitive prime divisor of $a^i-1$, if such a prime exists, and $R_i(a)$ to denote the set of all such divisors. Zsigmondy \cite{zs} have proved that primitive prime divisors exist for almost all pairs $(a, i)$.

\begin{lem}\cite{zs}\label{Zsig}
 Let $a$ be an integer and $|a|>1$. For every natural number $i$ the set $R_i(a)$ is nonempty, except for the pairs $(a, i)\in\{(2, 1), (2, 6), (-2,2), (-2,3), (3, 1), (-3, 2)\}$.
\end{lem}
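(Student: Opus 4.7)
The plan is to prove the statement via cyclotomic polynomials, following the classical Birkhoff--Vandiver approach. First I would reduce to the case $a \geq 2$. For negative $a = -b$ with $b \geq 2$, the identity $a^i - 1 = (-b)^i - 1$ expresses divisibility of $a^i - 1$ by a prime $r$ in terms of the order of $b$ modulo $r$ together with the parity of $i$; a short case check translates the exceptional set for positive bases into the additional pairs $(-2,2)$, $(-2,3)$, $(-3,2)$.

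Assume then $a \geq 2$ and $n \geq 1$. Write
\[
a^n - 1 = \prod_{d \mid n} \Phi_d(a),
\]
where $\Phi_d$ is the $d$-th cyclotomic polynomial. A primitive prime divisor of $a^n - 1$ is precisely a prime $r$ dividing $\Phi_n(a)$ with $r \nmid a^k - 1$ for all $k < n$. The crucial dichotomy, provable by the lifting-the-exponent argument applied to $a^n - 1$, is the following: if $r$ is a prime dividing $\Phi_n(a)$, then either $e(r, a) = n$, so $r \in R_n(a)$, or else $r \mid n$, in which case $r$ is the largest prime divisor of $n$ and $v_r(\Phi_n(a)) = 1$. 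Consequently, if $R_n(a) = \emptyset$, then $\Phi_n(a)$ is either $1$ or a single prime factor $\leq n$, forcing $\Phi_n(a) \leq n$.

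The heart of the argument is then a lower bound on $\Phi_n(a)$ that contradicts $\Phi_n(a) \leq n$. Using $\Phi_n(a) = \prod_\zeta (a - \zeta)$ over primitive $n$-th roots of unity together with $|a - \zeta| \geq a - 1$, one gets $\Phi_n(a) \geq (a-1)^{\varphi(n)}$. For $a \geq 3$, combining this with the standard estimate $\varphi(n) \geq \sqrt{n}$ valid beyond a few small $n$ yields $\Phi_n(a) > n$ for all but a small list of pairs. The case $a = 2$ is the main technical obstacle, since $(a-1)^{\varphi(n)} = 1$ carries no information; here one uses the sharper pairing bound $|2 - \zeta|\,|2 - \bar\zeta| = 5 - 4\cos\theta$ over conjugate pairs of primitive roots, or equivalently the asymptotic $\log|\Phi_n(2)|/\varphi(n) \to \log 2$, to conclude that $\Phi_n(2) > n$ once $n$ exceeds a fixed threshold.

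Finally I would dispatch the small remaining pairs $(a,n)$ by direct computation of $\Phi_n(a)$, verifying that the inequality $\Phi_n(a) > n$ fails exactly for the six pairs listed and that in each of these pairs no primitive prime divisor actually exists: $2^1 - 1 = 1$, the factor $3$ of $2^6 - 1 = 63$ already satisfies $e(3,2) = 2$ while $7$ is primitive for $i = 3$, and similarly for $(-2,2)$, $(-2,3)$, $(3,1)$, $(-3,2)$ the resulting value of $a^i - 1$ is either $\pm 1$ or only carries prime factors from $R_j(a)$ with $j < i$. The delicate part of the whole argument is the uniform lower bound at $a = 2$, whose collapse is precisely what produces the sporadic exception at $n = 6$.
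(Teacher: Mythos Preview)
The paper gives no proof of this lemma; it is simply cited as Zsigmondy's classical theorem. Your outline is the standard Birkhoff--Vandiver argument via cyclotomic polynomials and is correct in its architecture: the dichotomy for primes dividing $\Phi_n(a)$, the consequence $\Phi_n(a)\le n$ when $R_n(a)=\emptyset$, and the size estimate on $\Phi_n(a)$ that forces a contradiction outside a finite list.

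Two small points to tighten when you write it out. First, the claim $v_r(\Phi_n(a))=1$ for a non-primitive prime $r\mid\Phi_n(a)$ needs the hypothesis $n>2$, since $\Phi_2(a)=a+1$ can carry an arbitrary power of $2$; this is harmless because $n\le 2$ is handled directly. Second, be aware that the paper uses a nonstandard convention for the prime $2$: it sets $e(2,a)=1$ if $a\equiv 1\pmod 4$ and $e(2,a)=2$ otherwise. Thus ``$r\in R_n(a)$'' at $r=2$ is not literally the classical primitive-prime-divisor condition. This convention is precisely why $(3,1)$ and $(-3,2)$ appear in the exceptional list (in each case the only available prime is $2$, and it lands in the ``wrong'' $R_i$ under this rule) while the customary family $n=2$, $a+1$ a power of $2$, does not appear. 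Your reduction from negative to positive base and your final case check should be carried out with this convention in mind, but it does not affect the overall strategy.
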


For $i\neq2$ the product of all primitive divisors of $a^i-1$ taken with multiplicities is denoted by $k_i(a)$. Put $k_2(a)=k_1(-a)$.

\begin{lem}\label{primitiv}
If $k_i(a^n)=k_j(a^m)$, then $in=jm$ or $(a,in)\in\{(2, 1), (2, 6), (-2,2), (-2,3), (3, 1), (-3, 2)\}$.
\end{lem}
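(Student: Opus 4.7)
The plan is to apply Zsigmondy's theorem (Lemma \ref{Zsig}) to the primitive prime divisors of $a^{in}-1$ and $a^{jm}-1$. The key input is the identity $e(r, a^n) = e(r, a)/\gcd(e(r, a), n)$: any $r \in R_{in}(a)$ (so $e(r, a) = in$) automatically satisfies $e(r, a^n) = in/n = i$, and therefore $r$ is a primitive prime divisor of $(a^n)^i - 1$ in the sense of the defining base $a^n$. In other words, every prime in $R_{in}(a)$ contributes to $k_i(a^n)$, and conversely the ``top'' primitive part of $k_i(a^n)$ is exactly detected by $R_{in}(a)$.

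I would first assume that $(a, in)$ is not in the Zsigmondy exception set, so by Lemma \ref{Zsig} there exists $r \in R_{in}(a)$. Then $r \mid k_i(a^n) = k_j(a^m)$, so $r$ also appears as a primitive prime divisor on the right-hand side, giving $e(r, a^m) = j$. Substituting $e(r, a) = in$ into $j = e(r, a^m) = in/\gcd(in, m)$ yields $j \mid in$ and $in/j \mid m$, hence $in \leq jm$. A symmetric argument using a primitive prime divisor $r' \in R_{jm}(a)$ (with $e(r', a) = jm$ and $e(r', a^n) = j m/n \cdot (\text{gcd factor}) = i$) yields $jm \leq in$, and combining the two inequalities produces the desired $in = jm$.

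The remaining case $R_{in}(a) = \emptyset$ is precisely the conclusion $(a, in) \in \{(2,1), (2,6), (-2,2), (-2,3), (3,1), (-3,2)\}$ supplied by Lemma \ref{Zsig}. The convention $k_2(a) = k_1(-a)$ reduces the subcases $i = 2$ or $j = 2$ to a $k_1$-equality over the base $-a$, after which the same Zsigmondy argument applies verbatim. The most delicate point will be the asymmetric situation where $R_{in}(a) \neq \emptyset$ while $R_{jm}(a) = \emptyset$: there the generic argument yields only $in \leq jm$, so one must separately inspect the short list of exceptional pairs $(a, jm)$, compute $k_j(a^m)$ explicitly in each, and verify that any compatible $(i, n)$ forces either $in = jm$ or $(a, in)$ itself into the exception set. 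A secondary obstacle is the correct bookkeeping of multiplicities when the prime $r$ divides $in$ (where a lifting-the-exponent contribution enters), but this concerns only finitely many primes and is handled by routine verification.
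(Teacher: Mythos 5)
Your generic argument is exactly the paper's: take $r\in R_{in}(a)$ (Zsigmondy), note $e(r,a^n)=in/n=i$, so $r\mid k_i(a^n)=k_j(a^m)$, hence $e(r,a^m)=j$ and $in=j\gcd(in,m)\le jm$; the paper runs this single step under the assumption $in>jm$ and reads off a contradiction, so in the case where both $R_{in}(a)$ and $R_{jm}(a)$ are nonempty your two-sided version and the paper's one-sided version coincide. Your worry about multiplicities is unnecessary: the argument only uses that the single prime $r$ divides $k_j(a^m)$, never the exact power, so no lifting-the-exponent bookkeeping enters. (Your parenthetical formula for $e(r',a^n)$ in the reverse direction is garbled, but the intended mirror argument is the right one: $r'\mid k_i(a^n)$ forces $e(r',a^n)=i$, whence $jm\le in$.)

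The genuine gap is precisely the case you flag as delicate, $R_{in}(a)\neq\emptyset$ and $R_{jm}(a)=\emptyset$: the verification you defer there cannot be carried out, because with the exception attached only to $(a,in)$ the statement fails in that case. Concretely, $k_3(2)=7=k_3(4)$ (the only prime of order $3$ modulo $2$, respectively modulo $4$, is $7$, occurring to the first power in $2^3-1=7$ and in $4^3-1=63$), so $a=2$, $(i,n)=(3,1)$, $(j,m)=(3,2)$ satisfies $k_i(a^n)=k_j(a^m)$ with $in=3\neq 6=jm$ and $(2,3)$ not on the exception list; the Zsigmondy failure sits at $(a,jm)=(2,6)$. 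Hence no finite inspection of ``compatible $(i,n)$'' can force the conclusion as you (and the lemma) state it. The resolution is to read the conclusion symmetrically, allowing the exceptional pair to be $(a,in)$ or $(a,jm)$ --- this is what the paper's proof implicitly does by assuming $in>jm$, so that the possible Zsigmondy failure is always attached to the larger exponent; with that reading your two applications of Lemma \ref{Zsig} finish the proof immediately and the deferred case is simply the other branch of the disjunction rather than something to be excluded. (Neither you nor the paper spells out the $i=2$ or $j=2$ reduction via $k_2(a)=k_1(-a)$, which is where the negative-base pairs in the exception list come from; that part of your sketch is at the same level of detail as the original.)
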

\begin{proof}
Assume that $in>jm$. It follows from Lemma \ref{Zsig} that there exists a prime number $r\in R_{in}(a)\setminus R_k(a^m)$ for any $km<in$. Since $r\in R_i(a^n)$ and $in>jm$, we see that $r\not\in R_j(a^m)$; a contradiction.
\end{proof}

\begin{lem}\cite{Wil, Kon}\label{F4}
Let $G\simeq F_4(q)$, where $q$ is odd. Then
$$s(G)=2, \pi_1(G)=\pi(q(q^6-1)(q^8-1)), \pi_2(G)=\pi(q^4-q^2+1), t(G)=5, \rho(G)=\{r_3, r_4, r_6, r_8, r_{12}\}.$$
\end{lem}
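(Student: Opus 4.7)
The plan is to derive the statement from the order formula $|F_4(q)| = q^{24}(q^2-1)(q^6-1)(q^8-1)(q^{12}-1)$ combined with the known list of maximal tori of $F_4(q)$. First I would factor each parenthesized term into cyclotomic polynomials $\Phi_d(q)$ and observe that only $d \in \{1,2,3,4,6,8,12\}$ appears, with $\Phi_{12}(q) = q^4-q^2+1$ occurring exclusively in the factor $q^{12}-1$. By Zsigmondy (Lemma \ref{Zsig}), every prime in $\pi(G) \setminus \{p\}$ is a primitive prime divisor $r_i(q)$ for exactly one such $i$, and the primes in $R_{12}(q)$ divide $q^{12}-1$ but no $q^k-1$ with $k<12$. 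Hence $R_{12}(q)=\pi(q^4-q^2+1)$ is disjoint from $\pi(q(q^6-1)(q^8-1))$ and the two sets give a partition of $\pi(G)$.

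Next I would show these sets are exactly the connected components of $GK(G)$. For the disconnection, the crucial point is that every maximal torus $T$ of $F_4(q)$ whose order is divisible by $\Phi_{12}(q)$ coincides with the Coxeter torus of order $\Phi_{12}(q)$, because $\varphi(12)=4$ equals the rank of $F_4$, leaving no room for any other cyclotomic factor. Thus elements whose orders are divisible by some $r_{12}\in R_{12}(q)$ lie in a conjugate of this self-centralizing torus and commute only with elements whose orders divide $\Phi_{12}(q)$. For the connectedness of $\pi_1:=\pi(q(q^6-1)(q^8-1))$, I would exhibit, for each pair of indices $i,j\in\{1,2,3,4,6,8\}$, a maximal torus of $F_4(q)$ whose order is divisible by $\Phi_i(q)\Phi_j(q)$, or else connect via the characteristic $p$ through the unipotent radical of a Borel subgroup. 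Standard tori of shapes $\Phi_1\Phi_3$, $\Phi_2\Phi_6$, $\Phi_1\Phi_4$, $\Phi_2\Phi_8$, together with $p$-adjacency from the Borel, deliver a path between any two primes in $\pi_1$, so $s(G)=2$ with the stated partition.

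For the independence statement, I would check that the set $\{r_3,r_4,r_6,r_8,r_{12}\}$ is pairwise nonadjacent by verifying that no maximal torus of $F_4(q)$ has order divisible by $\Phi_i(q)\Phi_j(q)$ for distinct $i,j\in\{3,4,6,8,12\}$: each such product has total degree at least $4$ (and reaches $8$ in the cases involving $8$ or $12$), and the admissible combinations of eigenvalues for elements of the Weyl group $W(F_4)$ exclude these pairs. To show $t(G)=5$, I would argue that any candidate sixth vertex $r\in\{p\}\cup R_1(q)\cup R_2(q)$ is adjacent in $GK(G)$ to one of the five chosen primes: $p$ is adjacent to everything in $\pi_1$ via unipotent-torus products inside a Borel, $r_1$ is adjacent to $r_3,r_4,r_6,r_8$ via the split tori $\Phi_1\Phi_k$, and $r_2$ is adjacent to $r_4,r_6,r_8$ via tori $\Phi_2\Phi_k$.

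The main obstacle is the torus bookkeeping: the proof rests on a case-by-case verification over the $25$ conjugacy classes of the Weyl group $W(F_4)$, producing the complete table of maximal tori of $F_4(q)$ with their orders and centralizer structures. Without that table in hand, the commutation and non-commutation claims cannot be certified directly from the abstract definition of $F_4(q)$, so in practice I would cite the classification of Deriziotis--Liebeck (equivalently the data underlying \cite{Wil} and \cite{Kon}) and verify each adjacency in $GK(G)$ by inspecting the relevant torus order, which is the real labour hidden behind this compact lemma.
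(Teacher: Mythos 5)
First, note that the paper itself does not prove this lemma: it is quoted from Williams and Kondratiev (with the $t(G)$ and $\rho(G)$ data really coming from the Vasil'ev--Vdovin adjacency tables, \cite{VV} in the bibliography), so your plan amounts to reconstructing that cited classification, and its general shape (cyclotomic factorization of $|F_4(q)|$, Zsigmondy primes, the degree count $\deg\Phi_{12}=4=\mathrm{rank}\,F_4$ isolating the torus of order $q^4-q^2+1$, then a case check over the $25$ classes of $W(F_4)$) is the standard route. However, your sketch contains a concrete error precisely at the point where the hypothesis ``$q$ odd'' must enter. Since $\deg\Phi_8=4$ as well, your own degree argument shows that the only maximal torus whose order is divisible by a primitive prime divisor $r_8$ of $q^8-1$ is the cyclic torus of order exactly $\Phi_8(q)=q^4+1$; hence there are no maximal tori of shape $\Phi_1\Phi_8$ or $\Phi_2\Phi_8$, and the adjacencies ``$r_1\sim r_8$ via split tori'' and ``$r_2\sim r_8$ via tori $\Phi_2\Phi_k$'' which you use both to connect $r_8$ into $\pi_1$ and to bound $t(G)$ are false: in $F_4(q)$ the prime $r_8$ is nonadjacent to $p$, $r_1$ and $r_2$. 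Taken literally, your argument isolates $R_8(q)$ exactly as it isolates $R_{12}(q)$ and would give $s(G)=3$, contradicting the statement. The true reason $R_8(q)$ lies in $\pi_1$ is that for odd $q$ the cyclic torus of order $q^4+1$ has even order, so $r_8$ is adjacent to $2$; for even $q$ this fails and indeed $s(F_4(q))=3$, which is exactly why the lemma (and Chen's earlier work) treats odd $q$ separately --- a distinction your sketch never invokes.

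Two further soft spots. For the independence of $\{r_3,r_4,r_6\}$ the products $\Phi_3\Phi_4$, $\Phi_3\Phi_6$, $\Phi_4\Phi_6$ have degree exactly $4$, so degree counting proves nothing; you genuinely need that no element of $W(F_4)$ has one of these as characteristic polynomial (equivalently, no maximal torus has such an order), and since $r_i,r_j$ are odd and prime to $q$, any element of order $r_ir_j$ is semisimple and lies in a maximal torus, so the torus check does suffice --- but that reduction should be stated, not assumed. Finally, adjacency of $p$ to $r_3,r_4,r_6$ cannot come from a Borel subgroup, whose maximal torus is split of order $(q-1)^4$; it requires centralizers of semisimple elements or subsystem subgroups (for instance $A_2\tilde A_2(q)$ for $p\sim r_3$), and without these adjacencies the set $\{p,r_3,r_4,r_6,r_8,r_{12}\}$ is not excluded and $t(G)=5$ is not established. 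These are precisely the table look-ups you defer to \cite{VV} and the Deriziotis--Liebeck data, so the honest conclusion is that your outline is the right strategy but, as written, its explicit adjacency claims about $r_8$ are wrong and the maximality claim $t(G)=5$ is not yet proved.
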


\begin{lem}\cite{Wil, Kon}\label{E6}
Let $G\simeq E_6(q)$. Then
$$s(G)=2, \pi_1(G)=\pi(q(q^5-1)(q^8-1)(q^{12}-1)), \pi_2(G)=\pi((q^6+q^3+1)/(3,q-1)).$$
If $q=2$ then
$$t(G)=5 \mbox{ and }\rho(G)=\{5,13,17,19,31\} \mbox{ else }t(G)=6 \mbox{ and  }\rho(G)=\{r_4, r_5, r_6, r_8, r_9, r_{12}\}.$$
\end{lem}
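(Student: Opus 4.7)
The plan is to derive the description from the order formula for $E_6(q)$ combined with the classification of maximal tori. First, I would write
\[
|E_6(q)|=\frac{1}{(3,q-1)}\,q^{36}\,\Phi_1(q)^6\Phi_2(q)^4\Phi_3(q)^3\Phi_4(q)^2\Phi_5(q)\Phi_6(q)^2\Phi_8(q)\Phi_9(q)\Phi_{12}(q),
\]
so that the non-characteristic prime divisors of $|G|$ are exactly the primes $r$ with $e(r,q)\in D:=\{1,2,3,4,5,6,8,9,12\}$. This fixes $\pi(G)$.

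For the components of $GK(G)$ I would invoke the Deriziotis--Liebeck list of maximal tori of $E_6(q)$ together with the standard rule that distinct non-characteristic primes $r,s$ are adjacent in $GK(G)$ precisely when some maximal torus has order divisible by $rs$ (adjacency with the characteristic $p$ coming from the Borel subgroup). Tabulating torus orders shows that every torus whose order is divisible by $\Phi_9(q)$ is cyclic of order $\Phi_9(q)/(3,q-1)$ and shares no other cyclotomic factor with any torus; hence $R_9(q)$ is an isolated component, giving $\pi_2(G)=\pi((q^6+q^3+1)/(3,q-1))$. The remaining cyclotomic indices all pair with $d=1$ or $d=2$ inside some torus, so the corresponding primes form a single connected component $\pi_1(G)=\pi(q(q^5-1)(q^8-1)(q^{12}-1))$.

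For $t(G)$ and $\rho(G)$ I would verify that the six cyclotomic indices $\{4,5,6,8,9,12\}$ are pairwise incompatible among the torus types of $E_6(q)$, while every remaining index in $D$ shares a torus with at least one of them; this both produces the independent set $\{r_4,r_5,r_6,r_8,r_9,r_{12}\}$ of size six and rules out any larger one. For $q=2$, Lemma~\ref{Zsig} gives $R_6(2)=\emptyset$ (the pair $(2,6)$ is exceptional), so $r_6$ drops out and the independent set shrinks to five explicit primes obtained by evaluating the remaining $\Phi_d$ at $q=2$.

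The main obstacle is the systematic verification of pairwise non-adjacency inside the proposed $\rho(G)$ together with its maximality — essentially a bookkeeping exercise over the complete list of maximal torus types in $E_6$ — and, in the small case $q=2$, the additional check that, once $r_6$ has been removed, no new independent vertex appears in $GK(E_6(2))$.
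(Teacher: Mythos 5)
Your sketch of the component data is fine: the cyclotomic factorization of $|E_6(q)|$ is correct, and the isolation of $\pi_2$ follows because $\deg\Phi_9=6$ equals the rank, so the only maximal tori met by an element of order $r_9$ are the cyclic ones of order $\Phi_9(q)/(3,q-1)$ and such elements are regular semisimple. Be aware, though, that the paper does not prove this lemma at all — it quotes \cite{Wil,Kon}, which cover only $s(G)$, $\pi_1$, $\pi_2$; the $t(G)$, $\rho(G)$ data is the content of \cite{VV}, which is exactly the torus bookkeeping you propose to redo.

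The genuine problem is that the verification you defer as a ``bookkeeping exercise'' does not go through for the pair $\{r_4,r_6\}$, so your plan, executed honestly, refutes rather than confirms the displayed $t(G)$ and $\rho(G)$. The element of $W(D_5)\leq W(E_6)$ which is the product of a negative $2$-cycle and a negative $3$-cycle (Carter class $D_5(a_1)$) has characteristic polynomial $(t-1)(t^2+1)(t^3+1)$ on the reflection module, so $E_6(q)$ has a maximal torus of order $(q-1)(q^2+1)(q^3+1)$ up to the central factor $(3,q-1)$; equivalently, the Levi subgroup of type $D_5$ already contains an abelian subgroup of order $(q^2+1)(q^3+1)$. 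This order is divisible by $\Phi_4(q)\Phi_6(q)$ and $(3,q-1)$ is coprime to $r_4r_6$, hence $r_4r_6\in\omega(E_6(q))$ and $r_4\sim r_6$ in $GK(E_6(q))$. Consequently $\{r_4,r_5,r_6,r_8,r_9,r_{12}\}$ is not a coclique; the correct value is $t(E_6(q))=5$, with for instance $\{r_4,r_5,r_8,r_9,r_{12}\}$ a maximal coclique, which is precisely the Ennola dual ($q\mapsto -q$) of Lemma \ref{2E6} — note the statement as printed is inconsistent with Lemma \ref{2E6}, whose coclique for $^2E_6(q)$ omits $r_3$ (the dual of $r_6$) for exactly this reason. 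The $q=2$ case is also misstated: $19\nmid|E_6(2)|$ (no relevant exponent is divisible by $18$), and any maximal coclique must contain the isolated prime $73=\Phi_9(2)$, so the set should read $\{5,13,17,31,73\}$. So rather than promising the tabulation, you should carry it out, correct the statement accordingly (citing \cite{VV}), and observe that the main propositions are unaffected since they only use nonadjacency of the listed primes with $r_9$, which does hold.
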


\begin{lem}\cite{Wil, Kon}\label{2E6}
Let $G\simeq \ ^2E_6(q)$. Then
$s(G)=2,$ $\pi_1(G)=\pi(q(q^5+1)(q^8-1)(q^{12}-1)),$ $\pi_2(G)=\pi((q^6-q^3+1)/(3,q+1)),$ $t(G)=5,$ $\rho(G)=\{r_4, r_8, r_{10}, r_{12}, r_{18}\}.$
\end{lem}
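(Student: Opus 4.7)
The plan is to read off the two prime-graph components from the order
\[
|{}^2E_6(q)| = \tfrac{1}{(3,q+1)}\,q^{36}(q^2-1)(q^5+1)(q^6-1)(q^8-1)(q^9+1)(q^{12}-1),
\]
whose non-$p$ part, when decomposed into cyclotomic values $\Phi_d(q)$, involves exactly the indices $d\in\{1,2,3,4,6,8,10,12,18\}$. Among these, $\Phi_{18}(q)=q^6-q^3+1$ is the only cyclotomic factor whose primitive prime divisors are not shared with any other $\Phi_d$ on the list; dividing by $(3,q+1)$ accounts for the center of the simply-connected cover, and yields the candidate second component $\pi((q^6-q^3+1)/(3,q+1))$, while the remaining primitive divisors together with the characteristic prime $p$ fill $\pi(q(q^5+1)(q^8-1)(q^{12}-1))$.

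Next I would invoke the standard adjacency criterion: two primes $r,s\in\pi(G)\setminus\{p\}$ are adjacent in $GK(G)$ iff some maximal torus of $G$ has order divisible by $rs$, with a parallel torus-parabolic condition governing adjacency with $p$. For ${}^2E_6(q)$ the maximal tori are parameterised by $F$-conjugacy classes in the Weyl group, and their orders are products of cyclotomic values that can be tabulated once and for all. Inspecting this table one verifies that no torus has order divisible by $\Phi_{18}(q)\,\Phi_d(q)$ for any $d\in\{1,2,3,4,6,8,10,12\}$ and that $r_{18}$ is non-adjacent to $p$; this gives $s(G)=2$ with the two components as stated.

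The determination of $t(G)=5$ and $\rho(G)=\{r_4,r_8,r_{10},r_{12},r_{18}\}$ is then combinatorial. One checks directly from the torus table that no maximal torus has order divisible by any product $\Phi_i(q)\,\Phi_j(q)$ with $i\neq j$ drawn from $\{4,8,10,12,18\}$ and that $p$ is non-adjacent to each of these five primitive divisors, giving $t(G)\geq 5$. Conversely, any six primitive indices chosen from $\{1,2,3,4,6,8,10,12,18\}$ necessarily contain two whose cyclotomic factors jointly divide a single torus order — for instance, any six-element set must hit the ``connected'' cluster $\{1,2,3,4,6,8,12\}$ in an adjacent pair — forcing $t(G)\leq 5$.

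The principal obstacle is precisely this torus bookkeeping: one needs the complete list of maximal-torus orders for ${}^2E_6(q)$ and must apply the adjacency criterion carefully, both for the characteristic prime and for the small values of $q$ excluded in Lemma~\ref{Zsig}, where a primitive divisor $r_d$ may fail to exist and the generic statement must be patched. Once this combinatorial data is in place, the three assertions follow by a mechanical verification, which is essentially the content extracted from \cite{Wil, Kon}.
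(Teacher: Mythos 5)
The paper gives no argument for this lemma at all: the component data are quoted from Williams \cite{Wil} and Kondrat'ev \cite{Kon}, and the values $t(G)=5$, $\rho(G)=\{r_4,r_8,r_{10},r_{12},r_{18}\}$ come from the Vasil'ev--Vdovin tables \cite{VV}. Your outline — cyclotomic factorization of $|{}^2E_6(q)|$ into the $\Phi_d(q)$ with $d\in\{1,2,3,4,6,8,10,12,18\}$, identification of $\pi_2$ with the primitive prime divisors of $\Phi_{18}(q)=q^6-q^3+1$ (the factor $(3,q+1)$ strips the non-primitive prime $3$, which divides $\Phi_{18}(q)$ exactly when $3\mid q+1$), and the torus-order adjacency criterion for the remaining vertices — is exactly the route those sources take, so the strategy is sound.

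There is, however, a concrete error. You assert that $p$ is non-adjacent to each of $r_4,r_8,r_{10},r_{12},r_{18}$. This is false for $r_4$ and $r_{10}$: for instance, the centralizer of a long root element (or the subsystem subgroup of type $A_1(q)\times{}^2A_5(q)$) contains a factor $SU_6(q)$, whose order is divisible by $\Phi_4(q)\Phi_{10}(q)$, so $pr_4,\,pr_{10}\in\omega(G)$. Worse, if your claim were true, $\{p,r_4,r_8,r_{10},r_{12},r_{18}\}$ would be an independent set of size six, contradicting the very value $t(G)=5$ you are proving; in fact $\rho(p,G)=\{p,r_8,r_{12},r_{18}\}$ by \cite{VV}. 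Two further points: adjacency with the characteristic prime is not read off from maximal-torus orders at all (you mention the parabolic/unipotent condition but never use it), and your upper bound $t(G)\le 5$ only excludes six-element independent sets made of primitive divisors $r_d$ — you must also rule out sets containing $p$, which is easy with the correct $\rho(p,G)$ but impossible under your stated premise. Finally, the decisive content (the full list of torus orders and the case-by-case check, including small $q$ where Zsigmondy primes can fail) is deferred rather than carried out, so as written the proposal is a plan whose one explicitly stated adjacency claim is wrong.
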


\section{Proof of Theorem}

The proof is divided into two proposition.

\begin{proposition}
Let $G$ be a finite group with the trivial center such that $N(G)=N(L)$, where $L\simeq F_4(q)$ for odd $q$. Then
$G\simeq L$.
\end{proposition}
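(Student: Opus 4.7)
The plan is to apply the standard strategy for Thompson's conjecture in the disconnected-prime-graph setting. By Lemma \ref{F4}, $s(L) = 2$, $\pi_1(L) = \pi(q(q^6-1)(q^8-1))$, $\pi_2(L) = \pi(q^4-q^2+1) = R_{12}(q)$, and $\rho(L) = \{r_3, r_4, r_6, r_8, r_{12}\}$ is an independent set of size $t(L) = 5$. Lemmas \ref{order} and \ref{ordercomp} give $|G|=|L|$, $s(G)=2$ and $T(G)=T(L)$, so $GK(G)$ has the same two vertex-set components as $GK(L)$, and a standard transfer through centralizer orders (using that $N(G)=N(L)$ and $|G|=|L|$ match the centralizer multisets) shows $\rho(L)$ remains an independent set in $GK(G)$.

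Next, I would apply Lemma \ref{GK}. In case (a), suppose $G = K \rtimes H$ is Frobenius. Then $\pi(K), \pi(H)$ are disjoint and realise $\pi_1, \pi_2$ in some order. The case $\pi(K) = \pi_2$ is ruled out immediately on size grounds: $|L|_{\pi_1}$ is far larger than $|L|_{\pi_2}-1$, yet $|H|=|L|_{\pi_1}$ must divide $|K|-1=|L|_{\pi_2}-1$. The case $\pi(K) = \pi_1$ is ruled out by the clique argument: $K$ is nilpotent by Thompson's theorem, so $\pi(K)$ is a clique in $GK(G)$, contradicting the four pairwise nonadjacent primes $r_3, r_4, r_6, r_8 \in \pi_1$. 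The 2-Frobenius case is handled analogously, using a normal series $1 \trianglelefteq A \trianglelefteq B \trianglelefteq G$ and Zassenhaus' classification of Frobenius complements to bound the independence number of $\pi(A) \cup \pi(G/B) = \pi_1$ by three, again a contradiction.

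Thus Lemma \ref{GK}(b) applies and there is a non-abelian simple group $S$ with $S \leq \overline{G} := G/F(G) \leq \mathrm{Aut}(S)$, where $F(G)$ and $\overline{G}/S$ are $\pi_1$-groups and $\pi_2(L) = \pi_j(S)$ for some $j \geq 2$. I would then invoke the Wilson--Kondratiev classification of finite simple groups with disconnected prime graph and run through the candidates for $S$. Since $\pi_j(S) = R_{12}(q)$, Lemma \ref{primitiv} restricts the admissible field and rank parameters of any Lie-type candidate $S$ whose matching component is of primitive-divisor form. Alternating and sporadic candidates are eliminated by size: Lemma \ref{Ah} together with $|S|$ divides $|L|$ forces $|S|_p$ to be comparable with $|L|_p$, where $p$ is the defining characteristic of $L$, which is impossible for any sporadic group or alternating group of the relevant degree. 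Among the remaining Lie-type possibilities, order equality together with the finer class-size information from $N(G) = N(L)$ forces $S \simeq F_4(q)$.

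The main obstacle is this Lie-type case analysis: several groups of Lie type (notably $^3D_4(q)$, and certain variants of $G_2$, $E_6$, $^2E_6$) can accommodate a second prime-graph component of the form $R_{12}(q)$ for some parameter choice, and ruling them out requires careful comparison both of orders via Lemma \ref{primitiv} and of individual conjugacy class sizes recovered from $N(G) = N(L)$, typically by looking at classes of semisimple elements whose centralizers correspond to maximal tori attached to the relevant cyclotomic polynomials. Once $S \simeq F_4(q)$ is established, the equality $|S| = |L| = |G|$ forces $F(G) = 1$ and $\overline{G} = S$, hence $G \simeq S \simeq L$, completing the proof.
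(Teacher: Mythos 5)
There is a genuine gap in your elimination of case (a) of Lemma \ref{GK}. Your argument for the subcase $\pi(K)=\pi_1$ (and likewise your 2-Frobenius argument via Zassenhaus) rests on the claim that the independent set $\rho(L)=\{r_3,r_4,r_6,r_8,r_{12}\}$ ``remains an independent set in $GK(G)$'' by ``a standard transfer through centralizer orders.'' No such standard transfer exists: from $N(G)=N(L)$ and $|G|=|L|$ one obtains the multiset of centralizer orders, but adjacency in $GK(G)$ is a statement about element orders, which are not determined by class sizes. Chen's lemmas (Lemmas \ref{order} and \ref{ordercomp}) give only $|G|=|L|$, $s(G)=s(L)$ and $T(G)=T(L)$, i.e.\ the vertex sets of the components, not the edge structure inside $\pi_1$; recovering nonadjacency inside $\pi_1(G)$ is precisely the hard part of such papers and cannot be asserted for free. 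The paper avoids this entirely by using a different fact: $N(L)$ contains no number $\alpha$ with $\pi(\alpha)=\pi_2(L)$, while in a Frobenius group a central element of the nilpotent kernel has class size exactly $|C|$, and in a 2-Frobenius group $G=A.B.C$ one similarly produces class sizes with prime sets $\pi(B)$ and $\pi(A.C)$; since one of these prime sets must equal $\pi_2(G)=\pi_2(L)$, this contradicts $N(G)=N(L)$ directly, with the size comparison $|C|>|K|$ finishing the remaining Frobenius subcase.

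The second, larger gap is in case (b): your identification of $S$ is only a sketch. You acknowledge that several Lie-type candidates can carry a component of the form $R_{12}(q)$ and say that ruling them out ``requires careful comparison,'' but you do not carry it out, and you also never control the kernel $F(G)$. The paper's proof does concrete work here: it shows $\pi(K)\subseteq\{2,p\}$ by a Frattini/Schur--Zassenhaus argument producing a Frobenius group $T.R$ with $R$ a Sylow $r_{12}$-subgroup, forcing $q^4-q^2+1$ to divide $|T|-1$; it rules out alternating, sporadic and Tits candidates by order; it proves the defining characteristic of $S$ is $p$ via Lemma \ref{Ah} and a lower bound on $|\mathrm{Aut}(S)|$; it shows the rank of $S$ is at least $3$; it eliminates $2\in\pi(K)$ by a $2$-adic divisibility argument, so $K$ is a $p$-group; only then does the order-component comparison reduce to $S\in\{F_4(u),E_6(u),{}^2E_6(u)\}$, and Lemma \ref{primitiv} together with comparison of $p$-parts forces $S\simeq F_4(q)$, whence $K=1$ and $G\simeq L$. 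Without these steps (in particular the restriction of $\pi(K)$ and the final elimination of $E_6(u)$, ${}^2E_6(u)$ and candidates such as ${}^3D_4(u)$), your outline does not constitute a proof.
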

\begin{proof}

Let $L\simeq F_4(q)$ and $q=p^n$, where $p$ is an odd prime, $N(G)=N(L)$, $Z(G)=1$. It follows from Lemma \ref{F4} that the prime graph of $L$ has two connected components. From Lemmas \ref{order} and \ref{ordercomp} it follows that $|G|=|L|$ and $T(G)=T(L)$; in particular, $s(G)=2$.

\begin{lem}\label{F4-fr}
The group $G$ is not Frobenius group and not $2$-Frobenius group.
\end{lem}
\begin{proof}
Assume that $G$ is a Frobenius group with the kernel $K$ and a complement $C$. Since $K$ is a nilpotent group, there exists $\beta\in N(G)$ such that $\pi(\beta)=\pi(C)$. The graph $GK(G)$ is not connected, and consequently there are no elements in $G$ of order $tr$, where $t\in\pi(K)$ and $r\in\pi(C)$. Thus, $\pi(K)$ is a connected component of the graph $GK(G)$. There is no number $\alpha\in N(L)$ such that $\pi(\alpha)=\pi_2(G)$. Hence, $K$ is a $\pi_2(G)$-group and $|K|=q^4-q^2 + 1 $. But then $|C|=|G|/(q^4-q^2+1)>|K|$; a contradiction.

Assume that $G=A.B.C$ is a $2$-Frobenius group and the subgroups $A.B$ and $B.C$ are Frobenius groups. In this case there are numbers $\alpha$ and $\beta$ in $N(G)$ such that $\pi(\alpha)=\pi(B)$ and $\pi(\beta)=\pi(A.C)$. The subgraphs $GK(AC)$ and $GK(B)$ are the connected components of the graph $GK(G)$, but there is no $\gamma$ in $N(G)$ such that $\pi(\gamma)=\pi_2(G)$; a contradiction.

\end{proof}

It follows from the Lemmas \ref{GK} and \ref{F4-fr} that $G$ contains a unique non-abelian composition factor $S$ such that there is a normal nilpotent $\pi_1(G)$-subgroup $K$ and $S\leq G/K\leq Aut(S) $. Assume that there exists $t\in\pi (K)\cap\rho(G)$. Let $T$ be a Sylow $t$-subgroup of the group $K$, $R$ a Sylow $ r_{12} $-subgroup of the group $G$. Then, from the Frattini argument and Schur-Zassenhaus theorem, we can assume that $R\leq N_G(T)$. Since $tr_{12} \not\in\omega(G)$, then $T.R$ is a Frobenius group. Thus, $|T|-1$ is divisible by $q^4-q^2+1$; a contradiction.

From the fact that $r_1$ and $r_2$ are not adjacent in $GK(G)$ with $r_{12} $, similarly as above, we get that $\{r_1, r_2\} \setminus \{2\}\cap\Pi(K)=\varnothing$. Thus, $\pi(K)\subseteq\{2, p\}$, $s(S) = 2 $ and $|S|_{\pi_2(S)}=|G|_{\pi_2(G)}=q^4-q^2+1$.

Assume that $S$ is isomorphic to the alternating group $Alt_m$. Then one of the numbers $m, m-1$ or $m-2$ is prime and equal to $q^4-q^2+1$. Therefore, $|S|\geq (q^4-q^2 + 1)!/2>|L|=|G|$; a contradiction.

Analyzing the orders of sporadic groups and Tits groups, it is easy to show that $S$ is not isomorphic a sporadic or a Tits group.

Thus, $S$ is a group of Lie type over a field of the order $u=t^m $. Assume that $t\neq p$. By Lemma \ref{Ah} we have $|S|<|S|_t^3$. From the fact that $\pi(K)\subseteq\{2, p\} $ it follows that
$$|Aut(S)| \geq (q^2-1)(q^6-1)(q^8-1)(q^{12}-1)/(2^6(q+1)^4).$$
The number $t$ divides one of the numbers $(q^2-1), (q^6-1), (q^8-1), (q^{12}-1)$, we have $|S|_t \leq (q^2+q+1)^2 $; a contradiction. Thus, $t=p$.

Assume that $S$ is a group of Lie type and of rank less than $3$. Then
$$|G|_{\pi_2}^3 <| G |_{(\{2, p\}\cap\pi_2(S))'} = | S |_{(\{2, p\} \cap\pi_2(S))'} <|S|_{\pi_2(S)}^3 ;$$ a contradiction.

Assume that $2\in\pi(K)$. Since $K$ is nilpotent and there are no elements of order $2r_ {12}$ in the group $G$, there is a Frobenius group with the kernel $T\leq K$ of order $2^l$ and a complement of order $q^4-q^2+1$. We have $|L|_2=|G|_2\geq|S |_2 2^l\geq 64 (q+\varepsilon1)^4_2 $, where $\varepsilon\in\{+,-\}$. The number $q^4-q^2+1$ divides $|T|-1=2^l-1 $. From the description of the orders of simple groups with a disconnected prime graph (see \cite{Kon} and \cite{Wil}) and the fact that the rank of the group $S$ is greater than $3$ it follows that $|S|_2 \geq 2^6$. Thus, $|T|\leq|K|_2 \leq| q-\varepsilon1|_2^4$. However $2^l-1$ is not divisible by $q^4-q^2+1$ for any $2^l\leq|q-\varepsilon1|_2^4 $; a contradiction. Thus, $K$ is a $p$-group.

Since $K$ is a $p$-group, it follows that $$|G|_{\pi_2(G)}^6<|G/K|_{(\{p \} \cup \pi_2(G))'}=|G|_{(\{p\}\cup\pi_2)'}<|G|_{\pi_2(G)}^7.$$
It follows from the description of the orders of the connected components of simple groups that
$$S\in\{F_4 (u), E_6 (u), \ ^2E_6(u)\}.$$

Assume that $S\in\{E_6 (u), \ ^2E_6(u)\} $, where $u=p^m$. From the fact that $|G|_{\pi_2}=|S|_{\pi_2}$ and Lemma \ref{primitiv} it follows that $m=4n/3$. We obtain $|S|_p=q^{36}=p^{48n}>p^{24n}=|L|_p$; a contradiction. Thus, $S\simeq F_4 (u)$. From the fact that $|G|_{\pi_2(G)}=|S|_{\pi_2(S)}$ it follows that $u=q$, and hence the proposition is proved.
\end{proof}

\begin{proposition}
Let $G$ be a finite group with the trivial center such that $N(G)=N(L)$, where $L\simeq \  ^{\varepsilon}E_6(q)$ and $\varepsilon\in\{1,2\}$. Then $G\simeq L$.
\end{proposition}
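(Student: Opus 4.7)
The plan is to reproduce the scheme of the preceding proposition. Write $L \simeq \ ^{\varepsilon}E_6(q)$ and $q = p^n$ with $p$ prime. Lemmas \ref{order}, \ref{ordercomp} combined with Lemmas \ref{E6} and \ref{2E6} give $|G| = |L|$, $s(G) = 2$, and $\pi_2(G) = \pi_2(L)$. First I would rule out the Frobenius and $2$-Frobenius cases by copying the argument of Lemma \ref{F4-fr}: in either situation some factor is forced to be a $\pi_2$-group whose order is too small to accommodate a complement dividing its order minus one; equivalently, no $\gamma \in N(L)$ has $\pi(\gamma) = \pi_2(L)$. Then Lemma \ref{GK}(b) gives a non-abelian simple composition factor $S$ and a normal nilpotent $\pi_1(G)$-subgroup $K$ with $S \leq G/K \leq \mathrm{Aut}(S)$, $s(S) \geq 2$, and $|S|_{\pi_2(S)} = |L|_{\pi_2(L)}$.

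Next I would restrict $\pi(K)$. For any $t \in \pi(K) \cap \rho(G)$, a Sylow $t$-subgroup $T$ of $K$ and a Sylow $r$-subgroup $R$ of $G$ (with $r \in \pi_2(G)$) satisfy $R \leq N_G(T)$ by Frattini plus Schur--Zassenhaus, and since $tr \notin \omega(G)$ the product $T.R$ is Frobenius, forcing $|R|$ to divide $|T|-1$, contradicting the cyclotomic size of the primitive prime in $\pi_2$. Together with the non-adjacencies read off from Lemmas \ref{E6} and \ref{2E6}, this yields $\pi(K) \subseteq \{2, p\}$; the exceptional case $L = E_6(2)$ with $\rho(G) = \{5, 13, 17, 19, 31\}$ is handled identically. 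Removing $2$ from $\pi(K)$ proceeds as in the $F_4$ case: a nontrivial $2$-subgroup $T \leq K$ would give a Frobenius $T.R$ with $|R| = |L|_{\pi_2}$ dividing $2^l - 1$, but $|L|_2$ bounds $l$ below what is needed, a contradiction. Thus $K$ is a $p$-group.

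Then I classify $S$. The alternating case is excluded by order comparison (one of $m, m-1, m-2$ must equal the prime in $\pi_2$, forcing $|\mathrm{Alt}_m| > |L|$), and the sporadic/Tits groups are eliminated by inspection. If $S$ is of Lie type in characteristic $t \neq p$, Lemma \ref{Ah} gives $|S| < |S|_t^3$, but $|S|_t$ divides the $t$-part of $(q^2-1)(q^5\mp 1)(q^6-1)(q^8-1)(q^{12}-1)$, too small relative to $|S| \geq |L|/|K|$. Small Lie rank is ruled out by the inequality $|G|_{\pi_2}^6 < |G|_{(\{p\}\cup\pi_2)'} < |G|_{\pi_2}^7$, which forces $|S|_{\pi_2(S)}$ to be too large for rank at most two. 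Consulting \cite{Wil, Kon}, the remaining candidates are $S \in \{F_4(u), E_6(u), \ ^2E_6(u), E_7(u), E_8(u)\}$ over a field of characteristic $p$, with $u = p^m$.

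Finally I apply Lemma \ref{primitiv} to $|S|_{\pi_2(S)} = |L|_{\pi_2(L)}$: for each candidate the relation between $m$ and $n$ is pinned down, and then a comparison of $|S|_p$ with the required $p$-part rules out every case except $S$ of the same twisted/untwisted type as $L$ with $u = q$. Once this is achieved, $|G|/|S|$ is a divisor of $|\mathrm{Out}(L)|$ coprime to $|L|_p$, forcing $K = 1$ and $G \simeq L$. The hard part will be the bookkeeping in this last step: the cross-cases (such as $L = E_6(q)$ with $S = \ ^2E_6(u)$, or $L = \ ^{\varepsilon}E_6(q)$ with $S = F_4(u)$, $E_7(u)$, or $E_8(u)$) must each be treated individually, and the $q = 2$ subcase of $E_6$ needs separate verification against the Zsigmondy exceptions of Lemma \ref{Zsig}.
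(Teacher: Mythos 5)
Your proposal follows essentially the same route as the paper's proof: the order and component lemmas, elimination of the Frobenius and $2$-Frobenius cases, Lemma \ref{GK}, restriction of $\pi(K)$ first to $\{2,p\}$ and then to $\{p\}$ via Frobenius-subgroup arguments, elimination of alternating, sporadic/Tits, cross-characteristic and small-rank possibilities, and finally identification of $S$ through Lemma \ref{primitiv} together with a comparison of $p$-parts. The only real difference is that your candidate list at the last stage also admits $E_7(u)$ and $E_8(u)$, which the paper discards at once from the description of component orders, and your deferred ``bookkeeping'' eliminations are precisely the short comparisons the paper carries out (ruling out $F_4(u)$ via $m$ versus $n$ and the opposite twist via $p^{6n}-(-1)^{\varepsilon}p^{3n}+1\neq p^{6m}+(-1)^{\varepsilon}p^{3m}+1$), so the plan is sound and matches the paper's argument.
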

\begin{proof}
Let $L\simeq \  ^{\varepsilon} E_6(q)$ anf $q=p^n$, where $p$ is a prime, $\varepsilon\in\{1, 2\}$, $N(G)=N(L)$, $Z(G)=1$. It follows from Lemma \ref{F4} that the prime graph of $L$ has two connected components. From Lemmas \ref{order} and \ref{ordercomp} it follows that $|G|=|L|$ and $T(G)=T(L)$; in particular, $s(G)=2$.

\begin{lem}\label{E6-fr}
The group $G$ is not Frobenius group and not $2$-Frobenius group.
\end{lem}
\begin{proof}
Assume that $G$ is a Frobenius group with the kernel $K$ and a complement $C$. Since $K$ is a nilpotent group, there exists $\beta\in N(G)$ such that $\pi(\beta)=\pi(C)$. The graph $GK(G)$ is not connected, and consequently in $G$ there are no elements of order $tr$, where $t\in\pi(K)$ and $r\in\pi(C)$. Thus $\pi(K)$ is a connected component of the graph $GK(G)$. There is no number $\alpha\in N(G)$ such that $\pi(\alpha)=\pi_2(G)$. Hence, $K$ is a $\pi_2(G)$-group and $|K|=q^6-(-1)^{\varepsilon}q^3+1$. But then $|C|=|G|/q^6-(-1)^{\varepsilon}q^3+1>|K|$; a contradiction.

Assume that $G=A.B.C$ is a $2$-Frobenius group and the subgroups $A.B$ and $B.C$ are Frobenius groups. In this case there are numbers $\alpha$ and $\beta$ in $N(G)$ such that $\pi(\alpha)=\pi(B)$ and $\pi(\beta)=\pi(A.C)$. The subgraphs $GK(AC)$ and $GK(B)$ are the connected components of the graph $GK(G)$, but there is no $\gamma$ in $N(G)$ such that $\pi(\gamma)=\pi_2(G)$; a contradiction.

\end{proof}

It follows from the lemmas \ref{GK} and \ref{E6-fr} that $G$ contains a unique non-abelian composition factor $S$ such that there is a normal nilpotent $\pi_1$-subgroup $K$ and $S\leq G/K\leq Aut(S) $. Assume that there exists $t\in \pi(K)\cap\rho(G) $. Let $T$ be a Sylow $t$-subgroup of the group $K$, $R$ a Sylow $r_{9^{\varepsilon}}$-subgroup of the group $G$. Then, from the Frattini argument, we can assume that $R\in N(T)$. Since $tr\not\in\omega(G)$, then $T.R$ is a Frobenius group. Since $tr_{9^{\varepsilon}}\not\in\omega(G)$, then $T.R$ is a Frobenius group. Thus $|T|-1$ is divisible by $q^6-(- 1)^{\varepsilon}q^3+1 $; a contradiction.
From the fact that $r_1$ and $r_2$ are not adjacent in $GK(G)$ with $r_{9^{\varepsilon}}$, similarly as above, we get that $\{r_1, r_2\}\setminus\{2\} \cap \pi(K)=\varnothing$.
Thus,
$$\pi(K)\subseteq\{2, p\}, s(S)=2 \mbox{ and }|S|_{\pi_2(G)}=|G|_{\pi_2(G)}=q^6-(-1)^{\varepsilon} q^3+1 .$$

Assume that $S$ is isomorphic to the alternating group $Alt_m$, then one of the numbers $m, m-1, m-2$ is prime and equal to $q^6-(-1)^\varepsilon q^3+1 $. Therefore, $|S|\geq (q^6-(-1)^\varepsilon q^3 + 1)!> |L|=|G|$; a contradiction.

Analyzing the orders of sporadic groups and Tits groups, it is easy to show that $S$ is not a sporadic group or a Tits group.

Thus, $S$ is a group of Lie type over a field of the order $u=t^m$. Assume that $t\neq p$. By Lemma \ref{Ah} we have $|S|<|S|_t^3$. From the fact that $\pi(K)\subseteq\{2, p\}$ it follows that $$|Aut(S)|\geq(q^2-1)(q^5+(-1)^\varepsilon)(q^6-1)(q^8-1)(q^9+(-1)^\varepsilon)(q^{12}-1)/(2^6(q-1)^6).$$
The number $t$ divides one of the numbers $q^2-1, q^5+(-1)^{\varepsilon}, q^6-1, q^8-1, q^9 + (-1)^{\varepsilon}, q^{12}-1$, we have $|S|_t\leq(q^6-(-1)^{\varepsilon}q^3+1)^3$; a contradiction. Thus, $t=p$.

Assume that $S$ is a group of Lie type and of rank less than $3$. Then
$$|G|_{\pi_2(G)}^3 <|G|_{(\{2, p\}\cap\pi_2(G))'}=|S|_{(\{2,p\}\cap \pi_2 (G))'}<|S|_{\pi_2(G)}^3 ;$$
a contradiction.

Assume that $2\in\pi(K)$ and $p\neq2$. Since $K$ is nilpotent and there are no elements of order $2r_{9^{\varepsilon}}$ in $G$, then in $G$ there is a Frobenius group with the kernel $T\leq K$ of order $2^l$ and a complement of order $q^6-(-1)^{\varepsilon}q^3+1$. We have $|G|_2=|L|_2=64(q+\delta1)^6_2$, where $\delta \in \{+,-\}$. From the description of the orders of simple groups with a disjoint prime graph it follows that $|S|_2 \geq 2^6 $ or that the rank of the group $S$ is less than $3$. Thus, $|K|_2 \leq |q-\delta1|_2^6$. But $2^l-1$ is not divisible by $q^6-(-1)^{\varepsilon}q^3+1$ for any $2^l\leq|q-\delta1|_2^6$; a contradiction. Thus, $K$ is a $p$-group.

Since $K$ is a $p$-group, it follows that
$$|G|_{\pi_2(G)}^6 <|G/K|_{(\{p,\}\cup\pi_2(G))'}=|G|_{\{p,\pi_2(G)\}'}<|G|_{\pi_2(G)}^7.$$
From the description of the orders of the connected components of simple groups it follows that this condition is satisfied by $F_4(q), E_6(u), ^2E_6(u) $.

Assume that $S\simeq F_4(u)$, where $u=p^m$. From the fact that $|S|_{\pi_2(G)}=|G|_{\pi_2(G)}$ and the Lemma \ref{primitiv} it follows that $m=3n/4$. We obtain $|Aut(S)|_{p'}>|L|_{p'}$; a contradiction. Thus $S\not\simeq F_4(q)$.

Assume that $S\simeq \ ^{\alpha}E_6(u)$, where $\alpha \in \{1,2\}\setminus\{\varepsilon\}$. We have $p^{6n}-p^{3n} + 1 = p^{6m}+p^{3m}+1$; contradiction. The proposition is proved.

\end{proof}


\begin{thebibliography}{1}
\bibitem{Kour} V. D. Mazurov, E. I. Khukhro, Eds., The Kourovka Notebook:
Unsolved Problems in Group Theory, Russian Academy of Sciences Siberian Division,
Institute of Mathematics, Novosibirsk, Russia, 18th edition (2014), 253 pp.
\bibitem{Ch96} G. Chen, On Thompson's conjecture, J. Algebra, 185 (1996),  1, 184--193.
\bibitem{Ch99} G. Chen, Further reflections on Thompson's conjecture, J. Algebra 218 (1999), 1, 276--285.
\bibitem{Vas} A. V. Vasil'ev, On Thompson's conjecture, SEMR, 6 (2009), 457--464.
\bibitem{Ah11} N. Ahanjideh, On Thompson's conjecture for some finite simple groups, J. Algebra, 344 (2011), 205--228.
 \bibitem{Xu14} M. Xu, W. Shi, Thompson's conjecture for Lie type groups $E_7(q)$, Sci. China Math., 57 (2014),  3, 499--514.
\bibitem{Gore} D. Gorenstein, Finite groups, New York-London (1968), 527pp.
\bibitem{Wil} J. Williams, Prime graph components of finite groups, J. Algebra, 69 (1981),  2, 487--513.
\bibitem{Kon} A. S. Kondratiev, On prime graph components for finite simple groups, Mat. Sb., 180 (1989), no. 6, 787--797.

\bibitem{zs} K. Zsigmondy, Zur Theorie der Potenzreste, Monatsh. Math. Phys., 3 (1892), 265--284.
\bibitem{VV} A. V. Vasilev, E. P. Vdovin, An Adjacency Criterion for the Prime Graph of a Finite Simple Group, Algebra and Logic, 44 (2005), 6, 381--406.
\end{thebibliography}
\end{document}